\documentclass{amsart}
\usepackage{setspace, amsmath, amsthm, amssymb, amsfonts, amscd, epic, graphicx, ulem, dsfont}
\usepackage[T1]{fontenc}
\usepackage{multirow}
\usepackage{bbm}
\usepackage{enumerate}

\makeatletter \@namedef{subjclassname@2010}{
  \textup{2010} Mathematics Subject Classification}
\makeatother

\newtheorem{thm}{Theorem}[section]
\newtheorem{cor}[thm]{Corollary}
\newtheorem{lem}[thm]{Lemma}
\newtheorem{pro}[thm]{Proposition}

\theoremstyle{remark}
\newtheorem*{rema}{Remark}

\theoremstyle{definition}
\newtheorem*{defn}{Definition}

\newcommand{\ran}{\text{\rm{ran}}}

\newcommand{\R}{\mathbb{R}}

\newcommand{\C}{\mathbb{C}}

\begin{document}

\title[Invertibility of Normal Operators and Applications]{Right (Or Left) Invertibility of Bounded and Unbounded Operators and Applications to the Spectrum of Products}
\author[Souheyb Dehimi and M. H. MORTAD]{Souheyb Dehimi and MOHAMMED HICHEM MORTAD$^*$}

\dedicatory{}
\thanks{* Corresponding author.}
\date{}
\keywords{Normal and Self-adjoint Operators. Right Invertibility,
Left Invertibility. Invertibility. Spectrum of Products}

\subjclass[2010]{Primary 47A05,
 Secondary 47A10, 47B20, 47B25}

 \address{ Department of
Mathematics, University of Oran 1, A. Ben Bella, B.P. 1524, El
Menouar, Oran 31000, Algeria.\newline {\bf Mailing address}:
\newline Pr Mohammed Hichem Mortad \newline BP 7085 Seddikia Oran
\newline 31013 \newline Algeria}

\email{sohayb20091@gmail.com} \email{mhmortad@gmail.com,
mortad@univ-oran.dz.}

\begin{abstract}

This paper is mainly concerned with proving $\sigma(AB)=\sigma(BA)$
for two linear and non necessarily bounded operators $A$ and $B$.
The main tool is left and right invertibility of bounded and
unbounded operators.
\end{abstract}

\maketitle

\section{Introduction}
All operators considered here are linear and defined on a complex
separable Hilbert space $H$. In order to avoid trivialities in the
bounded case, we further assume that $\dim H=\infty$. Also, we
assume that the reader is well aware of the basic notions of bounded
and unbounded operators as well as the algebraic notions of right
and left invertibility.

It is known that if no condition is imposed on either of the
operators $A$ or $B$, then we are only sure that:

\[\sigma(AB)-\{0\}=\sigma(BA)-\{0\}.\]

We would like to know when

\[\sigma(AB)=\sigma(BA)\cdots\cdots(E)\]

holds for two linear bounded operators.

We already know that if one of the operators is invertible, then it
may be shown that $AB$ and $BA$ are similar, hence they have the
same spectrum. $(E)$ is also satisfied when one of the operators is
compact.

Hladnik-Omladi\v{c}
\cite{Hladnik-Omladic-spectrum-product-PAMS-1988} proved the
following:

\begin{thm}\label{hladnik} Let $A,B\in B(H)$ be such that $B$
is positive. Let $P$ be the (unique) square root of $B$. Then
\[\sigma(AB)=\sigma(BA)=\sigma(PAP).\]
\end{thm}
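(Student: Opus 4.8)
The plan is to reduce the whole statement to the classical fact that $\sigma(ST)-\{0\}=\sigma(TS)-\{0\}$ for any bounded $S,T$, applied to a well-chosen factorization, and then to deal with the point $0$ separately; this last step is the only place where positivity of $B$ is genuinely needed.

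First I would write $B=P^{2}$ and record the factorizations
\[
AB=(AP)P,\qquad PAP=P(AP),\qquad BA=P(PA),\qquad PAP=(PA)P.
\]
Thus $AB$ and $PAP$ are, respectively, the products of the bounded operators $AP$ and $P$ taken in the two possible orders, and likewise $BA$ and $PAP$ are the two products of $PA$ and $P$. Applying the known spectral identity first with $S=AP,\ T=P$ and then with $S=P,\ T=PA$ gives at once
\[
\sigma(AB)-\{0\}=\sigma(PAP)-\{0\}=\sigma(BA)-\{0\}.
\]

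It then remains to prove that $AB$, $BA$ and $PAP$ are either all invertible or all non-invertible, i.e. that $0$ lies in one of these three spectra if and only if it lies in all three. This is where self-adjointness of $P$ enters. If $PAP$ is invertible, with $E(PAP)=(PAP)E=I$, then $(EPA)P=I$ shows that $P$ is left invertible; similarly, invertibility of $AB=(AP)P$ makes $P$ left invertible, and invertibility of $BA=P(PA)$ makes $P$ right invertible. The key observation, which I would isolate as a short lemma (or simply quote as standard), is that a bounded self-adjoint operator which is left invertible (hence bounded below, so injective with closed range) or right invertible (hence surjective) is automatically invertible: indeed for self-adjoint $P$ one has $\overline{\ran P}=(\ker P)^{\perp}$, so a bounded-below $P$ has range both dense and closed, hence equal to $H$, while a surjective $P$ has $\ker P=(\ran P)^{\perp}=\{0\}$. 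Hence in each of the three cases $P$, and therefore $B=P^{2}$, is invertible; then $A=(AB)B^{-1}=B^{-1}(BA)=P^{-1}(PAP)P^{-1}$ is invertible as well, so $AB$, $BA$ and $PAP$ are all invertible simultaneously.

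Combining the two parts yields $\sigma(AB)=\sigma(BA)=\sigma(PAP)$. I expect the only real difficulty to be the lemma on one-sided invertibility of self-adjoint operators; the rest is formal manipulation of products, the one thing to keep track of being that in each application of $\sigma(ST)-\{0\}=\sigma(TS)-\{0\}$ the operator $P$ should appear on the outside in one product and on the inside in the other — which is exactly why the four factorizations above are written as they are.
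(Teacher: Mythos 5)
Your proof is correct and follows essentially the same route as the paper: both reduce the problem to the point $0$ via the standard identity $\sigma(ST)\setminus\{0\}=\sigma(TS)\setminus\{0\}$ and then settle invertibility at $0$ using the key fact that a one-sidedly invertible self-adjoint (here positive) operator is invertible, which is exactly the paper's Proposition~\ref{self-adjoint left right inve is invert exo}. The only cosmetic differences are that you treat $AB$, $BA$ and $PAP$ simultaneously through the factorizations $(AP)P$, $P(PA)$, $P(AP)$, $(PA)P$, whereas the paper first proves $\sigma(AB)=\sigma(BA)$ and then reapplies it to the pair $(PA,P)$, and that you supply the short proof of the one-sided invertibility lemma which the paper leaves to the reader.
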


Another case for which the equality $\sigma(AB)=\sigma(BA)$ holds is
when one of the operators is normal:

\begin{thm}\label{baraa}(Barraa-Boumazghour, \cite{Barraa-Boumazghour})\label{spectrum AB BA normal B equality}
Let $A,B\in B(H)$ be such that one of them is normal. Then
\[\sigma(AB)=\sigma(BA).\]
\end{thm}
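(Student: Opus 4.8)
The plan is to isolate the only nontrivial point. Since for arbitrary $A,B\in B(H)$ one always has $\sigma(AB)\setminus\{0\}=\sigma(BA)\setminus\{0\}$, the desired equality $(E)$ will follow as soon as we show that $AB$ is invertible if and only if $BA$ is invertible, i.e. that $0$ belongs to one spectrum exactly when it belongs to the other. Because the hypothesis is symmetric, it suffices to treat the case where $B$ is normal (the case ``$A$ normal'' is obtained by interchanging the roles of $A$ and $B$).

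First I would record the elementary algebraic consequences of invertibility of a product, which is exactly where one-sided invertibility enters as the main tool. If $AB$ is invertible with two-sided inverse $C$, then $CA$ is a left inverse of $B$ and $BC$ is a right inverse of $A$; hence $B$ is bounded below (so injective with closed range) and $A$ is onto. Symmetrically, if $BA$ is invertible, then $B$ is onto and $A$ is bounded below with closed range.

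The crucial step is the observation that a \emph{normal} operator which is one-sidedly invertible is already invertible. For normal $B$ one has $\|Bx\|=\|B^*x\|$ for all $x\in H$, so $\ker B=\ker B^*$ and $B$ is bounded below iff $B^*$ is. Thus: if $B$ is bounded below with closed range, then $\ker B^*=\{0\}$, so $\overline{\ran B}=H$, and since $\ran B$ is closed, $B$ is onto, hence invertible; and if $B$ is onto, then $\ran B=H$ is closed and $\ker B=\ker B^*=(\ran B)^{\perp}=\{0\}$, so again $B$ is invertible. Combining this with the previous paragraph, in either case ($AB$ invertible or $BA$ invertible) the normal operator $B$ turns out to be invertible.

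Finally, once $B$ is invertible, the identity $BA=B(AB)B^{-1}$ exhibits $AB$ and $BA$ as similar operators, so they share the same spectrum; in particular one is invertible iff the other is, which closes the argument and yields $\sigma(AB)=\sigma(BA)$. I do not expect any serious obstacle beyond bookkeeping: one must handle the two cases (product invertible on the $AB$ side versus the $BA$ side) separately and keep track of which of ``bounded below'' / ``onto'' is extracted from a left, respectively right, inverse. The normality of $B$ is used only through the single identity $\|Bx\|=\|B^*x\|$, which is the heart of the matter.
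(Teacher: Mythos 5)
Your proposal is correct and follows essentially the same route as the paper: both reduce the problem to showing that $AB$ is invertible iff $BA$ is invertible and both hinge on the key fact that a one-sidedly invertible normal operator is invertible (the paper's Corollary \ref{normal left right inve is invert exo}, deduced there from the self-adjoint case via the polar decomposition, whereas you prove it directly from $\|Bx\|=\|B^*x\|$). Your use of the similarity $BA=B(AB)B^{-1}$ once $B$ is invertible replaces the paper's chain of implications in Corollary \ref{jjj}, but this is only a cosmetic difference.
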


The proof of the preceding theorem relied on the following:

\begin{thm}\label{Spain}(Spain, \cite{Spain-normal-invertible}) A normal
unilaterally invertible element of a complex unital Banach algebra
is invertible.
\end{thm}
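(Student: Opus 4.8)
The plan is to reduce everything to the self-adjoint (hermitian) case, where the conclusion is essentially forced by the shape of the spectrum, and then recover the general normal element by a factorization. First, by symmetry it suffices to treat a \emph{left} invertible normal element $a$: if instead $a$ is right invertible, say $ac=1$, then $c^*a^*=1$, so $a^*$ is left invertible, and $a^*$ is again normal (since $a^*a=aa^*$ is exactly normality of $a^*$); once we know $a^*$ is invertible so is $a$. In a unital Banach algebra carrying no genuine involution one runs the same reduction with $\bar a:=h-ik$ in place of $a^*$, where $a=h+ik$ with $h,k$ hermitian and $hk=kh$.

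Next I would settle the hermitian case. If $s$ is self-adjoint (or hermitian) then $\sigma(s)\subseteq\R$, so $\sigma(s)$ has empty interior in $\C$ and hence $\sigma(s)=\partial\sigma(s)$. Now invoke the standard fact that every boundary point $\lambda$ of the spectrum of an element of a unital Banach algebra makes $\lambda-s$ a two-sided topological divisor of zero, and note that a topological divisor of zero is neither left nor right invertible: if $wz=1$ while $\|u_n\|=1$ and $zu_n\to 0$, then $u_n=wzu_n\to 0$, a contradiction. Consequently a hermitian element that is left or right invertible cannot have $0$ in its spectrum, i.e.\ it is invertible.

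To pass back from $s$ to $a$, the idea is to manufacture, out of $a$, a hermitian element that inherits one-sided invertibility; in a $C^*$-algebra the natural choice is $a^*a$. The key claim is: \emph{if $a$ is left invertible then $a^*a$ is invertible.} I would prove this by a standard functional-calculus argument: writing $ba=1$, if $0\in\sigma(a^*a)$ then, since $a^*a$ is positive, the continuous functional calculus produces positive norm-one elements $e_n$ with $\|(a^*a)e_n\|\to 0$; then $\|ae_n\|^2=\|e_n(a^*a)e_n\|\le\|(a^*a)e_n\|\to 0$, whence $\|e_n\|=\|bae_n\|\le\|b\|\,\|ae_n\|\to 0$, contradicting $\|e_n\|=1$. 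Normality now enters: $a^*a=aa^*$, so $aa^*$ is invertible as well, and therefore $a\bigl(a^*(aa^*)^{-1}\bigr)=1$, so $a$ is also right invertible — hence invertible. The right invertible hypothesis is disposed of by applying this to the left invertible normal element $a^*$.

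The step I expect to be the real obstacle is the previous one in a \emph{general} unital Banach algebra rather than a $C^*$-algebra: with no $C^*$-norm available the functional-calculus computation breaks down, and one must instead show directly, using the calculus of hermitian and normal elements of a Banach algebra, that the hermitian element playing the role of $a^*a$ (built from $h$ and $k$) is one-sided invertible whenever $a=h+ik$ is, and then quote the hermitian case. Controlling the interplay between left and right invertibility, and between $a$ and the commutative subalgebra it generates together with its ``conjugate'' $\bar a$, is the delicate part; the spectral geometry of hermitian elements, the boundary-of-spectrum fact, and the final factorization are all soft.
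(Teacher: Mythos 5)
You should first note that the paper does not actually prove this theorem: it is quoted from Spain's paper \cite{Spain-normal-invertible}, and for its own purposes the paper sidesteps it by the Hilbert-space results Proposition \ref{self-adjoint left right inve is invert exo} and Corollary \ref{normal left right inve is invert exo} (self-adjoint case plus polar decomposition in $B(H)$). Measured against that level, your argument is fine: the hermitian step (real spectrum, hence $\sigma(s)=\partial\sigma(s)$, boundary points are topological divisors of zero, and a topological divisor of zero is neither left nor right invertible) is correct, and the passage $ba=1\Rightarrow a^*a$ invertible $\Rightarrow aa^*$ invertible $\Rightarrow a\bigl(a^*(aa^*)^{-1}\bigr)=1$ is a complete and correct proof in any $C^*$-algebra, in particular in $B(H)$, and is essentially the same idea as the paper's Proposition-plus-polar-decomposition route.

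However, the statement is about an arbitrary complex unital Banach algebra, and there your proposal has a genuine, and indeed self-acknowledged, gap at exactly the point that constitutes Spain's theorem. In a Banach algebra ``normal'' means $a=h+ik$ with $h,k$ hermitian (in the numerical-range sense) and $hk=kh$; there is no involution, so neither $a^*$ nor $a^*a$ exists, and the natural surrogate $\bar a a=h^2+k^2$ need \emph{not} be hermitian, because hermitian elements of a Banach algebra are not closed under squaring or products; so you cannot simply ``quote the hermitian case'' for it, and the $C^*$-norm computation $\|ae\|^2=\|e(a^*a)e\|$ that drives your key claim has no counterpart. The symmetry reduction also breaks: from $ac=1$ you cannot pass to $\bar c\,\bar a=1$, since the one-sided inverse $c$ need not admit any conjugate. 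Thus the step from the hermitian case to a one-sidedly invertible normal element in a general Banach algebra --- precisely the content of the theorem --- is left unproved; to close it one needs the kind of argument Spain gives in \cite{Spain-normal-invertible} (working in the closed commutative subalgebra generated by $h$ and $k$ and using the spectral properties of hermitian elements there), not the $C^*$-identity. As it stands, your proof establishes the $B(H)$/$C^*$ version used in the paper, but not the Banach-algebra statement.
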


But, as quoted in \cite{Spain-normal-invertible} the result "appears
to have escaped notice up to now". However, Spain
\cite{Spain-normal-invertible} did miss that in Conway's \cite{Con}
(Fredholm Theory Chapter).

The first observation in our work is that all of the three previous
results become just a mere consequence (at least in $B(H)$!) of the
next proposition whose simple proof is left to the reader.

\begin{pro}\label{self-adjoint left right inve is invert exo}
Let $A,B\in B(H)$ be such that $A$ is self-adjoint. If $AB=I$ (or
$BA=I$), then $A$ is invertible and $B$ is self-adjoint.
\end{pro}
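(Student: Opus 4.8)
The plan is to exploit the single structural hypothesis available, namely $A=A^*$, by transporting a one-sided invertibility relation to the other side via the adjoint. Suppose first that $AB=I$. Taking adjoints gives $B^*A^*=I$, and since $A$ is self-adjoint this reads $B^*A=I$. Thus $A$ is simultaneously right-invertible (by $B$) and left-invertible (by $B^*$).

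Next I would invoke the elementary algebraic observation that an element possessing both a left and a right inverse is invertible, with the two one-sided inverses coinciding: from $B^*A=I$ and $AB=I$ one computes $B^*=B^*(AB)=(B^*A)B=B$. Hence $B=B^*$, so $B$ is self-adjoint, and moreover $A$ is invertible in $B(H)$ with $A^{-1}=B$ (the inverse is automatically bounded since $B\in B(H)$ by assumption).

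The case $BA=I$ is handled symmetrically: taking adjoints yields $A^*B^*=I$, i.e. $AB^*=I$ because $A=A^*$, so now $A$ is left-invertible by $B$ and right-invertible by $B^*$; the same cancellation $B=(AB^*)B\cdot\text{(rearranged)}$ — precisely $B=B(AB^*)=(BA)B^*=B^*$ — gives $B=B^*$ and $A^{-1}=B$.

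I do not anticipate any real obstacle here; the argument is essentially a one-line consequence of self-adjointness plus the cancellation trick. The only points requiring a word of care are that "invertible" is meant in $B(H)$ (so one should note the inverse is bounded, which is immediate from $B\in B(H)$), and that the adjoint of a relation such as $AB=I$ is formed correctly, namely $(AB)^*=B^*A^*$. Everything else is routine, which is consistent with the authors leaving the proof to the reader.
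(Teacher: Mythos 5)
Your argument is correct: taking adjoints of $AB=I$ and using $A=A^*$ gives $B^*A=I$, so $A$ has a left and a right inverse, the cancellation $B^*=B^*(AB)=(B^*A)B=B$ yields $B=B^*$ and $A^{-1}=B\in B(H)$, and the case $BA=I$ is symmetric. The paper leaves this proof to the reader, and yours is precisely the routine argument the authors intended, so nothing further is needed.
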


\begin{cor}\label{jjj}
 Let $A,B\in B(H)$ be such that $B$
is positive. Let $P$ be the (unique) square root of $B$. Then
\[\sigma(AB)=\sigma(BA)=\sigma(PAP).\]
\end{cor}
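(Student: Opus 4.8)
The plan is to derive the corollary from Proposition~\ref{self-adjoint left right inve is invert exo} together with the elementary fact, recalled in the introduction, that $\sigma(XY)\setminus\{0\}=\sigma(YX)\setminus\{0\}$ for any $X,Y\in B(H)$. First I would record the three factorizations
\[
AB=AP^2=(AP)P,\qquad BA=P^2A=P(PA),\qquad PAP=P(AP)=(PA)P,
\]
which are legitimate since $B=P^2$ with $P=P^*\ge 0$. Applying the quoted fact to the pair $(AP,P)$ gives $\sigma(AB)\setminus\{0\}=\sigma(PAP)\setminus\{0\}$, and applying it to the pair $(P,PA)$ gives $\sigma(BA)\setminus\{0\}=\sigma(PAP)\setminus\{0\}$. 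Hence the three spectra coincide away from $0$, and the only thing left to prove is that $0$ belongs to one of them exactly when it belongs to the other two; equivalently, that $AB$, $BA$ and $PAP$ are either all invertible or all non-invertible.

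Next I would show that invertibility of any one of the three operators forces $P$ to be invertible. Suppose, for instance, $BA$ is invertible. From $(BA)(BA)^{-1}=I$ and $BA=P(PA)$ we get $P\bigl[(PA)(BA)^{-1}\bigr]=I$, so $P$ is right invertible; since $P$ is self-adjoint, Proposition~\ref{self-adjoint left right inve is invert exo} yields that $P$ is invertible. The same argument works for the other two cases: if $AB$ is invertible, then $(AB)^{-1}(AB)=I$ and $AB=(AP)P$ give $\bigl[(AB)^{-1}AP\bigr]P=I$, so $P$ is left invertible; if $PAP$ is invertible, then $PAP=P(AP)$ gives $P\bigl[AP(PAP)^{-1}\bigr]=I$, so $P$ is right invertible. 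In each case the Proposition makes $P$ invertible.

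Finally, once $P$ is invertible so is $B=P^2$, and then in each of the three cases $A$ itself is invertible (for example $A=B^{-1}(BA)$ when $BA$ is invertible, $A=(AB)B^{-1}$ when $AB$ is invertible, and $A=P^{-1}(PAP)P^{-1}$ when $PAP$ is invertible). Consequently $AB$, $BA$ and $PAP$ are all products of invertible operators, hence all invertible. Taking contrapositives, if one of them fails to be invertible then so do the other two, i.e. $0\in\sigma(AB)\iff 0\in\sigma(BA)\iff 0\in\sigma(PAP)$. Combining this with the agreement off $0$ established in the first step gives $\sigma(AB)=\sigma(BA)=\sigma(PAP)$.

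I do not expect a serious obstacle here; the one point that requires care is that the equality of the spectra at the single point $0$ cannot be obtained from the general identity $\sigma(XY)\setminus\{0\}=\sigma(YX)\setminus\{0\}$ alone (one-sided invertibility of a product does not pass to the reversed product in infinite dimensions), and it is precisely there that self-adjointness of $P$, through Proposition~\ref{self-adjoint left right inve is invert exo}, is essential.
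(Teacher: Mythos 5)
Your argument is correct and follows essentially the same route as the paper: reduce everything to the point $0$ via the identity $\sigma(XY)\setminus\{0\}=\sigma(YX)\setminus\{0\}$, and then use one-sided invertibility of a self-adjoint factor together with Proposition~\ref{self-adjoint left right inve is invert exo} to show that invertibility of one product forces invertibility of all three. The only cosmetic difference is that you apply the Proposition to $P$ uniformly in all three cases, whereas the paper applies it to $B$ to get $\sigma(AB)=\sigma(BA)$ and then reuses that equality with the pair $(PA,P)$ to obtain $\sigma(BA)=\sigma(PAP)$.
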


\begin{proof}
To establish $\sigma(AB)=\sigma(BA)$, we have to show that $AB$ is
invertible iff $BA$ is invertible. This is done as follows:

\begin{align*}
BA \text{ \textbf{is invertible} }&\Longrightarrow B \text{ is right invertible}\\
&\Longrightarrow B \text{ is invertible (by Proposition \ref{self-adjoint left right inve is invert exo})}\\
&\Longrightarrow B^{-1}\text{ is invertible }\\
&\Longrightarrow B^{-1}BA=A\text{ is invertible }\\
&\Longrightarrow AB \text{ \textbf{is invertible} }\\
&\Longrightarrow B \text{ is left invertible }\\
&\Longrightarrow B \text{ is invertible (by Proposition \ref{self-adjoint left right inve is invert exo})}\\
&\Longrightarrow B^{-1}\text{ is invertible }\\
&\Longrightarrow A=ABB^{-1} \text{ is invertible }\\
&\Longrightarrow BA \text{ \textbf{is invertible}. }
\end{align*}
This settles the first equality. To prove the second equality, just
apply the first equality to obtain
\[\sigma(BA)=\sigma(PPA)=\sigma(PAP).\]
\end{proof}

Using the polar decomposition of a normal operator, Proposition
\ref{self-adjoint left right inve is invert exo} yields

\begin{cor}\label{normal left right inve is invert exo} Let $A\in B(H)$ be a
right (or left) invertible normal operator. Then $A$ is invertible.
\end{cor}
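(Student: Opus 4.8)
The plan is to invoke the polar decomposition of $A$ and reduce the statement to Proposition \ref{self-adjoint left right inve is invert exo}, applied to the positive operator $|A|=(A^*A)^{1/2}$.

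The first thing to set up is the fact that, because $A$ is normal, its polar decomposition can be written as $A=U|A|$ with $U$ \emph{unitary} (and in fact $U|A|=|A|U$, though commutativity is not needed here). This is the only place where normality is genuinely used: for an arbitrary bounded operator $U$ is merely a partial isometry with initial space $(\ker A)^\perp=\overline{\ran A^*}$ and final space $\overline{\ran A}$; for a normal operator one has $\ker A=\ker A^*$ and $\overline{\ran A}=\overline{\ran A^*}$, and the ``phase'' of $U$ on $\ker A$ may be chosen freely (say equal to the identity there), which upgrades $U$ to a genuine unitary. I expect this step to be the only slightly delicate point; the rest is formal algebra.

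Next, assume $A$ is right invertible, say $AB=I$ for some $B\in B(H)$. Substituting $A=U|A|$ gives $U|A|B=I$; multiplying on the left by $U^{*}$ yields $|A|B=U^{*}$, and then multiplying on the right by $U$ gives $|A|(BU)=I$. Hence $|A|$ is right invertible. Since $|A|$ is self-adjoint, Proposition \ref{self-adjoint left right inve is invert exo} forces $|A|$ to be invertible, so $A=U|A|$ is a product of two invertible operators and is therefore invertible (with $A^{-1}=|A|^{-1}U^{*}$). The left-invertible case is essentially the same: if $BA=I$, then $BU|A|=I$, i.e. $(BU)|A|=I$, so $|A|$ is left invertible, and Proposition \ref{self-adjoint left right inve is invert exo} again gives the invertibility of $|A|$ and hence of $A$.

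I should also note that one can bypass the polar decomposition entirely: right invertibility of $A$ gives $\ran A=H$, hence $\ker A=\ker A^{*}=(\ran A)^{\perp}=\{0\}$ by normality, so $A$ is bijective and thus invertible by the bounded inverse theorem; left invertibility makes $A$ bounded below, hence $\ran A$ closed, while $\overline{\ran A}=(\ker A^{*})^{\perp}=(\ker A)^{\perp}=H$, so again $A$ is bijective. Since the paper explicitly signals the polar-decomposition route, I would present that as the main argument and keep this elementary alternative as a remark.
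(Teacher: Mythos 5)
Your argument is correct and is exactly the route the paper intends: the paper leaves the proof to the reader but explicitly announces that the corollary follows from Proposition \ref{self-adjoint left right inve is invert exo} via the polar decomposition $A=U|A|$ with $U$ unitary, which is precisely what you carry out. Your elementary kernel/range alternative in the closing remark is also sound and is essentially the mechanism used later for the unbounded generalization in Theorem \ref{normal unbd left right inve is invert thm}.
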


\begin{proof}
Left to the reader.
\end{proof}

This (combined with Corollary \ref{jjj}) gives

\begin{cor}\label{spectrum AB BA normal B equality}
Let $A,B\in B(H)$ be such that one of them is normal. Then
\[\sigma(AB)=\sigma(BA).\]
\end{cor}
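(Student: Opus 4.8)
The plan is to reduce the equality of spectra, exactly as in the proof of Corollary~\ref{jjj}, to the single statement that $AB$ is invertible if and only if $BA$ is. Indeed, it is classical (and already recalled in the introduction) that $\sigma(AB)-\{0\}=\sigma(BA)-\{0\}$ for arbitrary $A,B\in B(H)$, so the two spectra can differ at most at the point $0$; hence proving $\sigma(AB)=\sigma(BA)$ amounts to showing that $0\in\sigma(AB)$ precisely when $0\in\sigma(BA)$, i.e. that $AB$ and $BA$ are invertible simultaneously.

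Before carrying this out I would remove the asymmetry in the hypothesis. Interchanging the roles of $A$ and $B$ interchanges $\sigma(AB)$ with $\sigma(BA)$ (and likewise "$AB$ invertible" with "$BA$ invertible"), so there is no loss of generality in assuming that it is $B$ which is normal; the case where $A$ is the normal operator then follows by relabeling.

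For the core step, suppose first that $BA$ is invertible. Then $B$ is right invertible (a right inverse being $A(BA)^{-1}$), and since $B$ is normal, Corollary~\ref{normal left right inve is invert exo} forces $B$ to be invertible; consequently $A=B^{-1}(BA)$ is a product of invertible operators, hence invertible, and therefore so is $AB$. Conversely, if $AB$ is invertible, then $B$ is left invertible (with left inverse $(AB)^{-1}A$), so Corollary~\ref{normal left right inve is invert exo} again yields that $B$ is invertible, whence $A=(AB)B^{-1}$ and then $BA$ are invertible. This establishes the equivalence, and combined with the preliminary reduction it gives $\sigma(AB)=\sigma(BA)$.

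I do not expect any genuine obstacle here: the argument is essentially a transcription of the proof of Corollary~\ref{jjj}, with Corollary~\ref{normal left right inve is invert exo} used in place of Proposition~\ref{self-adjoint left right inve is invert exo}. The only point that deserves a line of care is the "one of them is normal" phrasing, which is handled by the symmetry remark in the second paragraph.
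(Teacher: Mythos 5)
Your proof is correct and follows essentially the same route the paper intends: it reduces the statement to the simultaneous invertibility of $AB$ and $BA$ via $\sigma(AB)-\{0\}=\sigma(BA)-\{0\}$ and then runs the argument of Corollary \ref{jjj} with Corollary \ref{normal left right inve is invert exo} replacing Proposition \ref{self-adjoint left right inve is invert exo}, together with the harmless symmetry reduction to the case where $B$ is the normal factor. Nothing needs to be changed.
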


Unfortunately, we cannot go up to the class of hyponormal operators.
Indeed, consider the usual (unilateral) shift $S$ on $\ell^2$. Then
$S^*S=I$, $SS^*\neq I$ and $S$ is hyponormal. Hence
\begin{enumerate}
  \item $S$ is left invertible without being invertible;
  \item Also,
  \[\sigma(S^*S)=\{1\}\neq \sigma(SS^*)=\{0,1\}.\]
\end{enumerate}

We can, however, generalize the previous results to non necessarily
bounded operators. Moreover, normality is not indispensable. Only a
condition of the type $\ker(A)=\ker(A^*)$ (or even an inclusion in
some cases) will suffice. See Theorem \ref{normal unbd left right
inve is invert thm}.

It is worth noticing, that the works on the spectra of unbounded
products are only numbered. For instance, see
\cite{Hardt-Konstantinov-Spectrum-product}, \cite{Hardt-Mennicken},
\cite{Moller-essential spectrum} and \cite{Friedrich-Ran-Wojtylak}

We conclude this introduction with an application of Corollary
\ref{jjj} (cf. \cite{Mortad-PAMS2003}).

\begin{pro}(cf. Theorem \ref{Theorem hypo})
Let $A,B\in B(H)$ be such that $A$ is positive and $B$ is
self-adjoint. If $AB$ (or $BA$) is hyponormal, then $AB$ (or $BA$)
is self-adjoint.
\end{pro}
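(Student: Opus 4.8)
The plan is to deduce from Corollary \ref{jjj} that the spectrum of the product is real, and then to combine this with Putnam's inequality in order to upgrade hyponormality to normality, and normality with a real spectrum to self-adjointness. Suppose first that $AB$ is hyponormal, and write $P=A^{1/2}$ for the positive square root of $A$. Applying Corollary \ref{jjj} with the roles of the two operators interchanged (there the positive factor is named $B$, here it is $A$), we get
\[\sigma(AB)=\sigma(BA)=\sigma(PBP).\]
Since $B=B^*$ and $P=P^*$, the operator $PBP$ is self-adjoint, so $\sigma(PBP)\subseteq\R$, and hence $\sigma(AB)\subseteq\R$.

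Next I would invoke Putnam's inequality: for every hyponormal $T\in B(H)$,
\[\|T^*T-TT^*\|\leq\frac{1}{\pi}\,\mathrm{Area}\bigl(\sigma(T)\bigr).\]
Applying this to $T=AB$ and using that $\sigma(AB)\subseteq\R$ has zero planar Lebesgue measure forces $T^*T=TT^*$, i.e. $AB$ is normal. Finally, a normal operator whose spectrum is contained in $\R$ is self-adjoint (immediate from the spectral theorem, since then $T^*=\int\overline{\lambda}\,dE(\lambda)=\int\lambda\,dE(\lambda)=T$), and therefore $AB$ is self-adjoint. The case in which $BA$ is hyponormal is handled identically: Corollary \ref{jjj} again gives $\sigma(BA)=\sigma(PBP)\subseteq\R$, Putnam's inequality yields that $BA$ is normal, and normality together with a real spectrum shows that $BA$ is self-adjoint.

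The argument is therefore short, and the only ingredient not already developed in the paper is Putnam's inequality (together with the elementary fact that a normal operator with real spectrum is self-adjoint). I do not expect a genuine obstacle here: the substantive point is that Corollary \ref{jjj} makes the spectrum of the product real, after which ``hyponormal with spectrum of planar measure zero $\Rightarrow$ normal $\Rightarrow$ self-adjoint'' finishes the proof. The one place that calls for a little care is simply keeping track of which of $A$, $B$ plays the role of the positive factor when quoting Corollary \ref{jjj}.
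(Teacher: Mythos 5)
Your proposal is correct and follows essentially the same route as the paper: both arguments use Corollary \ref{jjj} (with the positive factor playing the role of the corollary's $B$) to conclude that $\sigma(AB)=\sigma(BA)=\sigma(PBP)\subseteq\R$, and then finish with the fact that a hyponormal operator with real spectrum is self-adjoint. The only difference is that the paper simply cites Stampfli for this last fact, whereas you re-prove it on the spot via Putnam's inequality (zero planar area of $\sigma(AB)$ forces normality, and a normal operator with real spectrum is self-adjoint), which is a perfectly valid, self-contained justification of the same step.
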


\begin{proof}
By Corollary \ref{jjj}, $\sigma(AB)$ (or $\sigma(BA)$) is real. The
result then follows by remembering that a hyponormal operator with a
real spectrum is self-adjoint (see e.g. \cite{Stampfli hyponormal
1965}).
\end{proof}

\begin{cor}
Let $A,B\in B(H)$ be such that $A$ is positive and $B$ is
self-adjoint. If $AB$ (or $BA$) is hyponormal, then $A+iB$ is
normal.
\end{cor}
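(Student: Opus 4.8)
The plan is to combine the preceding proposition with the elementary fact that, for self-adjoint $A$ and $B$, the operator $A+iB$ is normal precisely when $A$ and $B$ commute. First I would invoke the previous proposition: since $A$ is positive, $B$ is self-adjoint, and $AB$ (or $BA$) is hyponormal, we already know that $AB$ (or $BA$) is self-adjoint. So the work reduces to deducing normality of $A+iB$ from this self-adjointness.

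Next I would record the algebraic identity that drives everything. Setting $T=A+iB$, so that $T^*=A-iB$ (using $A^*=A$ by positivity and $B^*=B$), a direct expansion of the products of bounded operators gives
\[T^*T=A^2+B^2+i(AB-BA)\quad\text{and}\quad TT^*=A^2+B^2-i(AB-BA),\]
whence $T^*T-TT^*=2i(AB-BA)$. Consequently $T$ is normal if and only if $AB=BA$.

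It then remains to observe that the conclusion of the proposition, namely self-adjointness of $AB$ (or $BA$), is exactly the commutation relation $AB=BA$. Indeed, since $A$ and $B$ are self-adjoint, $(AB)^*=B^*A^*=BA$, so $AB$ is self-adjoint iff $AB=BA$; applying the same reasoning to $(BA)^*=A^*B^*=AB$ settles the parenthetical case. Feeding this back into the identity of the previous paragraph shows that $A+iB$ is normal, which is the desired conclusion.

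I do not expect a genuine obstacle here: the statement is essentially a one-line corollary of the proposition once the identity $T^*T-TT^*=2i(AB-BA)$ is in hand. The only points worth a moment of care are that positivity of $A$ enters solely through $A^*=A$, and that all operators involved are bounded, so the adjoint manipulations $(AB)^*=B^*A^*$ carry no domain subtleties.
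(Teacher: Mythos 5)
Your argument is correct and is exactly the intended route: the paper states this corollary without proof immediately after the proposition, and the evident argument is yours — the proposition gives $AB$ (or $BA$) self-adjoint, which for self-adjoint $A,B$ means $AB=BA$, and then the identity $T^*T-TT^*=2i(AB-BA)$ with $T=A+iB$ yields normality. No gaps; the adjoint manipulations are unproblematic since everything is bounded.
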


\section{Left or right invertible unbounded operators}

First, recall (cf. \cite{Con}):

\begin{defn}
An unbounded linear operator $A$ with domain $D(A)\subset H$, is
said to be invertible  if there exists an everywhere defined $B\in
B(H)$ such that
\[AB=I \text{ and } BA\subset I.\]
\end{defn}

\begin{rema}
It is known that if $A$ and $B$ are two unbounded and invertible
operators, then $AB$ is invertible and $(AB)^{-1}=B^{-1}A^{-1}$.
\end{rema}

\begin{rema}
The invertibility of $A$ is equivalent to requiring $A$ to be
injective and $A^{-1}\in B(H)$. Hence, if $A$ is invertible, then
$A$ is closed and if $A$ is closed and densely defined, then $A$ is
invertible iff $A^*$ is so.
\end{rema}

Based on the definition just above and the bounded case, we
introduce:

\begin{defn}Let $H$ be a Hilbert space and let $A$ be an unbounded operator with domain $D(A)\subset
H$. We say that $A$ is \textbf{right invertible} if there exists an
everywhere defined $B\in B(H)$ such that $AB=I$; and we say that $A$
is \textbf{left invertible} if there is an everywhere defined $C\in
B(H)$ such that $CA\subset I$.
\end{defn}

\begin{rema}
It is easily seen that if $A$ is closed and densely defined, then
$A$ is left (resp. right) invertible iff $A^*$ is right (resp. left)
invertible.
\end{rema}

To show the importance of the notion of left or right invertibility,
we give the following result:

\begin{pro}\label{invert unboun left=right pro}
Let $A$ be a non necessarily bounded operator with domain
$D(A)\subset H$. If $A$ is left and right invertible simultaneously,
then $A$ is invertible.
\end{pro}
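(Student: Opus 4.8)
The plan is to imitate the familiar bounded-operator argument: from one-sided invertibility on each side we extract a two-sided inverse that is forced to be bounded and everywhere defined. Concretely, suppose $B,C\in B(H)$ are everywhere defined with $AB=I$ and $CA\subset I$. First I would observe that $AB=I$ forces $A$ to be surjective onto $H$, and $CA\subset I$ forces $A$ to be injective (if $Ax=0$ then $x=CAx=0$). So $A$ is a bijection from $D(A)$ onto $H$, and $A^{-1}$ exists as a linear map $H\to D(A)$.

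Next I would show $B=C$ (on all of $H$), which will simultaneously identify $A^{-1}$ with a bounded everywhere-defined operator. The standard one-line manipulation is $C=C(AB)=(CA)B$; here one must be slightly careful because $CA\subset I$ is only an inclusion, but $B$ maps $H$ into $D(A)$ (since $AB$ makes sense on all of $H$), so for every $x\in H$ we have $Bx\in D(A)$ and hence $(CA)(Bx)=Bx$; therefore $C(ABx)=Bx$, i.e. $Cx=Bx$ for all $x$. Thus $B=C\in B(H)$. Now $A^{-1}=B=C$: indeed $AB=I$ gives $AA^{-1}=I$, and for $x\in D(A)$ we have $x=CAx=BAx$, i.e. $BA\subset I$. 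So $A$ is injective with $A^{-1}=B\in B(H)$, which by the remark on invertibility in the excerpt (invertibility $\iff$ injective with bounded everywhere-defined inverse) is exactly the assertion that $A$ is invertible.

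The only genuine subtlety — and the one place I would slow down — is handling the domains correctly, since in the unbounded setting $CA\subset I$ is an inclusion of operators rather than an equality: $D(CA)=D(A)$, not all of $H$. The resolution is precisely that $\operatorname{ran}(B)\subset D(A)$ automatically (because $AB$ is defined everywhere), so composing $C$ with $A$ against vectors of the form $Bx$ never leaves the domain where $CA$ agrees with the identity. Once that is noted, everything collapses to the bounded-case computation. No heavier machinery (polar decomposition, normality, etc.) is needed here; this proposition is purely algebraic plus a domain bookkeeping check, and it will in turn serve as the engine for the unbounded analogues of Corollaries \ref{jjj} and \ref{normal left right inve is invert exo}.
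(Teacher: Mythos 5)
Your proposal is correct and is essentially the paper's own argument: the identity $C=C(AB)=(CA)B\subset B$, forcing $B=C$ and hence $AB=I$ together with $BA\subset I$, is exactly the one-line proof given there. The only difference is that you spell out the domain bookkeeping ($\ran(B)\subset D(A)$) which the paper handles implicitly through the inclusion notation.
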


\begin{proof}By assumption $AB=I$ and $CA\subset I$ for some bounded $B,C\in
B(H)$. Then
\[C=C(AB)=(CA)B\subset B,\]
and hence $B=C$ as they are both everywhere defined.
\end{proof}

\begin{cor}\label{right or left invertible s.a. opera pro}
A right (or left) invertible non necessarily bounded self-adjoint
operator is invertible.
\end{cor}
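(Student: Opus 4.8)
The goal is to prove Corollary \ref{right or left invertible s.a. opera pro}: a right (or left) invertible, non necessarily bounded, self-adjoint operator is invertible. By Proposition \ref{invert unboun left=right pro}, it suffices to show that a self-adjoint operator which is right invertible is automatically also left invertible (and conversely), since simultaneous one-sided invertibility on both sides forces genuine invertibility. So the plan is to exploit the symmetry $A=A^*$ together with the Remark that, for closed densely defined $A$, left invertibility of $A$ is equivalent to right invertibility of $A^*$.

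Here is the argument I would carry out. Suppose $A$ is self-adjoint and right invertible, so there is $B\in B(H)$ everywhere defined with $AB=I$. A self-adjoint operator is closed and densely defined, so the Remark applies: $A$ is left invertible iff $A^*=A$ is right invertible — but that is exactly our hypothesis. Hence $A$ is left invertible as well. Now Proposition \ref{invert unboun left=right pro} gives that $A$ is invertible. The case where $A$ is assumed left invertible is symmetric: left invertibility of $A=A^*$ is the same as right invertibility of $A^*=A$, again giving both one-sided inverses, so Proposition \ref{invert unboun left=right pro} finishes it.

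Alternatively, and perhaps more transparently, one can argue directly from $AB=I$. Taking adjoints (all operators in sight are densely defined, and $I$, $B$ are bounded everywhere defined) yields $B^*A^*\subset(AB)^*=I$, i.e. $B^*A\subset I$ since $A^*=A$. Thus $B^*$ is a bounded everywhere defined left inverse of $A$, so $A$ is left invertible. Combined with the given right inverse $B$, Proposition \ref{invert unboun left=right pro} yields $B=B^*$ and $A$ is invertible. The symmetric case starting from $CA\subset I$ proceeds the same way by taking adjoints to get $AC^*=I$.

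The only subtlety — the step I would treat with a little care — is the manipulation of adjoints of products involving unbounded operators: the identity $(AB)^*=B^*A^*$ requires that $B$ be bounded (which it is, by hypothesis), and one must keep track of the direction of the inclusion versus equality ($AB=I$ on all of $H$, whereas $B^*A$ is only an extension-restriction of $I$, matching the definition of left invertibility with the "$\subset$"). Apart from that bookkeeping, the proof is immediate from Proposition \ref{invert unboun left=right pro} and the adjoint Remark.
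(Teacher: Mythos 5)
Your proof is correct and is precisely the argument the paper intends (the corollary is stated without proof immediately after Proposition \ref{invert unboun left=right pro}): self-adjointness converts the given one-sided inverse into an inverse on the other side by taking adjoints --- either via the remark that left invertibility of $A$ is equivalent to right invertibility of $A^*=A$, or directly from $B^*A^*\subset (AB)^*=I$ --- and Proposition \ref{invert unboun left=right pro} then yields invertibility. Your bookkeeping about using only the inclusion $B^*A\subset I$ (which matches the definition of left invertibility) rather than an equality is exactly the right point of care.
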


We now turn to non necessarily bounded normal operators.
Fortunately, Corollary \ref{normal left right inve is invert exo}
also holds for unbounded operators. In fact, the result is true for
a more general class of operators.

\begin{thm}\label{normal unbd left right inve is invert thm}
A right (resp. left) invertible closed and densely defined operator
$A$ such that $\ker(A)\subseteq \ker (A^*)$ (resp.
$\ker(A^*)\subseteq \ker(A)$) is invertible. In particular, if $A$
is closed and densely defined and $\ker(A)=\ker (A^*)$, then $A$ is
left invertible iff $A$ is right invertible iff $A$ is invertible.
\end{thm}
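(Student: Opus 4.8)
The plan is to treat the right-invertible case; the left-invertible case then follows by passing to the adjoint (using the remark that $A$ is left invertible iff $A^*$ is right invertible, together with the fact that $(A^*)^* = A$ for closed densely defined $A$, and noting that $\ker(A^*)\subseteq\ker(A)$ is exactly the hypothesis $\ker(A^*)\subseteq\ker((A^*)^*)$ for the operator $A^*$). So assume $A$ is closed and densely defined, $AB = I$ for some $B \in B(H)$, and $\ker(A)\subseteq\ker(A^*)$. By Proposition \ref{invert unboun left=right pro}, it suffices to show that $A$ is also \emph{left} invertible, i.e. that $BA\subset I$, or equivalently that $B$ is injective with closed range and $\ran B = D(A)$ in a way compatible with $A$; the cleanest route is to show $A$ is injective, since an injective right-invertible operator is automatically invertible (from $AB=I$ and injectivity of $A$ one gets $BA \subset I$ on $D(A)$).

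First I would record the elementary consequences of $AB = I$: the operator $A$ is surjective onto $H$ (its range contains $\ran(AB) = H$), and $B$ is injective. Next, the key step: I claim $\ker(A) = \{0\}$. Suppose $x \in \ker(A)$. By hypothesis $x \in \ker(A^*)$ as well, so $x \perp \overline{\ran(A)}$. But we just saw $\ran(A) = H$, forcing $x = 0$. Hence $A$ is injective. Combined with surjectivity, $A$ is a bijection from $D(A)$ onto $H$; since $A$ is closed, the inverse map $A^{-1}$ is a closed everywhere-defined operator, hence bounded by the closed graph theorem, so $A$ is invertible in the sense of the definition given above. (Alternatively, and more in the spirit of the earlier corollaries: from $AB = I$ and injectivity of $A$, for any $x \in D(A)$ we have $A(BAx) = (AB)(Ax) = Ax$, so $BAx = x$, i.e. $BA \subset I$; thus $A$ is left invertible, and Proposition \ref{invert unboun left=right pro} finishes it.)

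The main obstacle — really the only subtlety — is making sure the range argument is airtight: one must observe that $AB = I$ on all of $H$ genuinely gives $\ran(A) = H$ (not merely dense range), which is immediate since for every $y \in H$, $y = A(By)$ with $By \in D(A)$. With surjectivity in hand, the orthogonality relation $\ker(A^*) = \ran(A)^\perp$ (valid for any densely defined $A$) collapses $\ker(A^*)$ to $\{0\}$, and the inclusion $\ker(A)\subseteq\ker(A^*)$ transmits this to $\ker(A)$. Everything else is bookkeeping. Finally, for the "in particular" clause: if $\ker(A)=\ker(A^*)$ then both inclusions hold, so right invertibility implies invertibility and left invertibility implies invertibility; and invertibility trivially implies both one-sided invertibilities, closing the cycle of equivalences.
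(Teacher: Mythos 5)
Your proof is correct and follows essentially the same route as the paper: from $AB=I$ you get $\ran(A)=H$, hence $\ker(A^*)=\ran(A)^\perp=\{0\}$, the kernel inclusion gives injectivity of $A$, and the Closed Graph Theorem applied to the closed, everywhere-defined $A^{-1}$ yields boundedness; the left-invertible case is reduced to this one via adjoints exactly as in the paper. Your extra observation that $AB=I$ plus injectivity directly gives $BA\subset I$ is a nice touch but does not change the argument.
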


\begin{proof}By the two remarks just above, it suffices to consider
the case of right invertibility. So assume that $A$ is right
invertible, i.e. $AB=I$ for some $B\in B(H)$. Hence
\[\ran(A)=H\Longrightarrow \ker(A^*)=\{0\}\Longrightarrow \ker(A)=\{0\}.\]
Since $A^{-1}$ is closed and $D(A^{-1})=\ran(A)=H$, the Closed Graph
Theorem yields $A^{-1}\in B(H)$, that is, $A$ is invertible.
\end{proof}

\begin{rema}
Plainly, self-adjoint and normal operators $A$ are closed densely
defined and they obey $\ker(A)=\ker(A^*)$.

On the other hand, if $A$ is closed and hyponormal, then
$\ker(A)\subseteq \ker(A^*)$. So a right invertible closed
hyponormal operator is invertible. Similarly, a left invertible
closed cohyponormal operator is invertible.
\end{rema}

\section{Spectra of Products of Unbounded Operators}

In this paper, we use the following definition (cf. \cite{Kat}) of
the spectrum:

\begin{defn}
Let $A$ be a non necessarily bounded operator with domain
$D(A)\subset H$. We say that $\lambda$ is not in $\sigma(A)$ if
$A-\lambda$ is injective and $(A-\lambda)^{-1}$ is in $B(H)$.
\end{defn}

\begin{rema}
Using the previous definition, we easily see that if
$\sigma(A)\neq\C$, then $A$ is closed.
\end{rema}

In \cite{Hardt-Konstantinov-Spectrum-product}, it is shown that if
$A$ and $B$ are two non necessarily bounded operators such that
$\sigma(AB)\neq\C$ and $\sigma(BA)\neq \C$ (hence both $AB$ and $BA$
are closed), then
\[\sigma(AB)-\{0\}=\sigma(BA)-\{0\}.\]

It is clear that if we want to obtain the equality
$\sigma(AB)=\sigma(BA)$, we must show that $AB$ is invertible iff
$BA$ is invertible. We reserve a substantial part to this
equivalence.

\begin{thm}\label{BA invertible implies AB invertible thm}
Assume  $A$ is a closed and densely defined operator in $H$ and
$B\in B(H)$ is such that $BA$ is invertible. If either $\ker
(A^*)\subseteq \ker(A)$ or $\ker(B)\subseteq\ker(B^*)$, then the
operators $A$, $B$ and consequently $AB$ are invertible.
\end{thm}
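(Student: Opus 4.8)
The plan is to exploit the factorization $BA$ invertible to force $B$ to be right invertible and $A$ to be left invertible, then invoke the structural results of Section~2 to upgrade these one-sided inverses to genuine invertibility. First I would write $(BA)^{-1}=:C\in B(H)$ with $C(BA)\subset I$ and $(BA)C=I$. From $(BA)C=I$ we get $B(AC)=I$, so $B$ is right invertible with right inverse $AC$ (note $AC$ is everywhere defined and bounded since $\ran(C)\subseteq D(BA)\subseteq D(A)$ and $BA$ invertible makes $AC$ bounded — this boundedness point needs a short argument via the closed graph theorem or the identity $B(AC)=I$ together with $A$ closed). Similarly, from $C(BA)\subset I$ we read $(CB)A\subset I$, so $A$ is left invertible with left bounded inverse $CB$.

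Next I split into the two hypotheses. If $\ker(B)\subseteq\ker(B^*)$, then since $B$ is right invertible and bounded, $\ran(B)=H$, so $\ker(B^*)=\{0\}$, hence $\ker(B)=\{0\}$; a right invertible injective bounded operator is invertible, so $B\in B(H)$ is invertible. Then $A=B^{-1}(BA)$ is a product of two invertible operators (using the Remark that the product of invertible operators is invertible with the expected inverse), so $A$ is invertible, and consequently $AB$ is invertible as a product of invertibles. If instead $\ker(A^*)\subseteq\ker(A)$, then $A$ is a closed densely defined left invertible operator, so by Theorem~\ref{normal unbd left right inve is invert thm} (applied in its left-invertible form) $A$ is invertible; then $B=(BA)A^{-1}$ is invertible, and again $AB$ is invertible.

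The main obstacle I anticipate is not the algebra of the one-sided inverses but the care needed with domains and boundedness in the unbounded setting: one must check that $AC$ really is an everywhere-defined \emph{bounded} operator serving as a right inverse of $B$, and that $CB$ maps into $D(A)$ and satisfies $(CB)A\subset I$ rather than just $A(CB)=I$. The cleanest route is probably to observe that $BA$ invertible gives $D(BA)=D(A)$ after a translation argument is unnecessary here, that $\ran((BA)^{-1})=D(BA)\subseteq D(A)$, and that $A\!\restriction_{D(BA)}$ composed with $(BA)^{-1}$ is bounded because $A$ is closed and its restriction to the range of a bounded operator is closed with full domain $H$. Once these domain bookkeeping points are settled, the two cases close immediately using Corollary~\ref{right or left invertible s.a. opera pro}'s generalization, namely Theorem~\ref{normal unbd left right inve is invert thm}, together with the elementary fact that an injective bounded surjection is invertible.
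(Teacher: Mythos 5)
Your proposal is correct and follows essentially the same route as the paper: you extract the right invertibility of $B$ (via the closed graph theorem applied to $AC$, which is exactly the paper's Lemma~\ref{BA invertible B right invertible lem}) and the left invertibility of $A$ from $C(BA)\subset I$, then upgrade each to invertibility using the kernel hypotheses via Theorem~\ref{normal unbd left right inve is invert thm}, and finally recover the other operator by factoring through $(BA)$. The only cosmetic differences are that in the $\ker(A^*)\subseteq\ker(A)$ case you get $B=(BA)A^{-1}$ invertible directly as a product of invertibles, whereas the paper argues injectivity plus a closed graph step, and in the other case you verify $B$'s invertibility by hand rather than citing the theorem; both variants are sound.
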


To prove this theorem we need a lemma.

\begin{lem}\label{BA invertible B right invertible lem}
If $A$ is closed and $B$ is an operator such that $BA$ is right
invertible, then $B$ too is right invertible.
\end{lem}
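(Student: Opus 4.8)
The plan is to unwind the definitions: if $BA$ is right invertible there is an everywhere defined $C \in B(H)$ with $(BA)C = I$. The obvious candidate for a right inverse of $B$ is $AC$, since $B(AC) = (BA)C = I$. The only thing that needs checking is that $AC$ genuinely qualifies as a right inverse in the sense of the paper's definition, i.e.\ that $AC$ is everywhere defined and bounded on all of $H$. Boundedness is the issue: $C$ is bounded, but $A$ is only closed, not bounded, so $AC$ need not be bounded a priori; indeed the product of a bounded operator with an unbounded closed one is in general not even closed. What saves us is that $AC = (BA)(AC)\cdot$ — no, more precisely $AC = (BA)C$ composed the right way is not quite it; instead note $C = (BA)C \subseteq$ is not available either. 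Let me reorganize.

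First I would observe that the associativity $B(AC) = (BA)C = I$ holds as an inclusion of operators, with the domain of $AC$ being $\{x \in H : Cx \in D(A)\}$. Since $(BA)C = I$ and $I$ is everywhere defined, this forces $\ran(C) \subseteq D(BA) \subseteq D(A)$, so in fact $D(AC) = H$: the operator $AC$ is already everywhere defined. That removes the domain worry. Then I would show $AC$ is bounded via the Closed Graph Theorem: $AC$ is closed because $A$ is closed and $C$ is bounded and everywhere defined (the standard fact that $A$ closed, $C \in B(H)$ implies $AC$ closed — if $x_n \to x$ and $ACx_n \to y$, then $Cx_n \to Cx$ by continuity of $C$, and $Cx_n \in D(A)$ with $A(Cx_n) \to y$, so by closedness of $A$ we get $Cx \in D(A)$ and $A(Cx) = y$, i.e.\ $x \in D(AC)$ and $ACx = y$). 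Since $AC$ is closed and everywhere defined on the Hilbert space $H$, the Closed Graph Theorem gives $AC \in B(H)$. Therefore $B(AC) = I$ with $AC \in B(H)$, which is exactly the definition of $B$ being right invertible.

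The main obstacle — really the only subtlety — is making sure one does not accidentally use unjustified associativity or domain identities for unbounded operators; everything has to be phrased carefully as inclusions, and the key leverage point is that $(BA)C = I$ being everywhere defined forces $\ran(C) \subseteq D(A)$, which is what collapses all the domain technicalities. After that, the Closed Graph Theorem does the rest. I would write the argument in three short lines: (i) $AC$ is everywhere defined because $\ran(C)\subseteq D(BA) \subseteq D(A)$; (ii) $AC$ is closed because $A$ is closed and $C \in B(H)$; (iii) hence $AC \in B(H)$ and $B(AC) = (BA)C = I$, so $B$ is right invertible.
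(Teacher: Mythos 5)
Your argument is correct and is essentially the paper's own proof: from $(BA)C=I$ one gets $D(AC)=H$, and the Closed Graph Theorem (using that $AC$ is closed since $A$ is closed and $C\in B(H)$) yields $AC\in B(H)$, so $B(AC)=I$. You simply spell out the closedness of $AC$ and the domain inclusion $\ran(C)\subseteq D(A)$, details the paper leaves implicit.
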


\begin{proof}
Since $BAC=I$ for some $C\in B(H)$, it follows that $D(AC)=H$. Hence
by the Closed Graph Theorem, $AC\in B(H)$, and we are done.
\end{proof}

Now we give a proof of Theorem \ref{BA invertible implies AB
invertible thm}.

\begin{proof}\hfill
\begin{itemize}
  \item $\ker(A^*)\subseteq \ker(A)$: We may write
  \begin{align*}
  BA \text{ invertible}\Longrightarrow & DBA\subseteq I \text{ for some }D\in B(H)\\
  \Longrightarrow & A \text{ left invertible}\\
  \Longrightarrow & A^{-1}\in B(H)\text{ (Theorem \ref{normal unbd left right inve is invert thm})}\\
  \Longrightarrow &B=(BA)A^{-1} \text{ injective}\\
  \Longrightarrow &A^{-1}B^{-1}=(BA)^{-1}\in B(H)\\
  \Longrightarrow &D(B^{-1})=H.
  \end{align*}
  In fine, the Closed Graph Theorem gives $B^{-1}\in B(H)$, as
  required.
  \item $\ker(B)\subseteq\ker(B^*)$: We can write:
  \begin{align*}
  BA \text{ invertible}\Longrightarrow & B \text{ right invertible (Lemma \ref{BA invertible B right invertible lem})}\\
  \Longrightarrow & B \text{ invertible (Theorem \ref{normal unbd left right inve is invert thm})}\\
  \Longrightarrow &B^{-1} \text{ invertible }\\
  \Longrightarrow &A=B^{-1}(BA) \text{ invertible. }\\
  \end{align*}
  Accordingly, $A$, $B$ and $AB$ are all invertible.
\end{itemize}
\end{proof}

Interchanging the roles of $BA$ and $AB$ in the assumptions of the
foregoing theorem does not lead to the invertibility of $BA$. An
extra condition has to be added. We have:

\begin{thm}\label{Ab invert implies BA invertible thm}
Assume $A$ is a closed densely defined operator and $B\in B(H)$ is
such that $B^*A^*$ is closed and $AB$ is invertible. If either $\ker
(A)\subseteq \ker(A^*)$ or $\ker(B^*)\subseteq\ker(B)$, then the
operators $A$, $B$ and consequently $BA$ are invertible.
\end{thm}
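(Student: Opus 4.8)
The plan is to mimic the two-case structure of the proof of Theorem \ref{BA invertible implies AB invertible thm}, but with the roles of $A$ and $B$ exchanged and using the dual hypotheses, the extra assumption that $B^*A^*$ is closed being exactly what compensates for the asymmetry pointed out before the statement. The unifying idea is to pass to adjoints: since $AB$ is invertible and bounded, $(AB)^*=B^*A^*$ is invertible as well, and because $B^*A^*$ is assumed closed it is a legitimate ``$BA$-type'' product to which Theorem \ref{BA invertible implies AB invertible thm} can be applied with $A$ replaced by $B^*$ (bounded) and $B$ replaced by $A^*$ (closed and densely defined, since $A$ is). The kernel conditions translate correctly: $\ker(A)\subseteq\ker(A^*)$ is the same as $\ker((A^*)^*)\subseteq\ker(A^*)$, which is the ``$\ker(\widetilde A^*)\subseteq\ker(\widetilde A)$'' hypothesis of Theorem \ref{BA invertible implies AB invertible thm} for $\widetilde A=A^*$; and $\ker(B^*)\subseteq\ker(B)$ is literally the ``$\ker(\widetilde B)\subseteq\ker(\widetilde B^*)$'' hypothesis for $\widetilde B=B^*$.

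So the steps I would carry out are as follows. First, record that $AB\in B(H)$ invertible forces $(AB)^{-1}\in B(H)$ and that $B^*A^*=(AB)^*$ is therefore invertible; here I need the fact (true since $AB$ is everywhere defined and bounded) that $(AB)^*$ is invertible with $((AB)^*)^{-1}=((AB)^{-1})^*$. Second, invoke the hypothesis that $B^*A^*$ is closed so that Theorem \ref{BA invertible implies AB invertible thm} is applicable to the pair $(B^*,A^*)$ in place of $(B,A)$: note $A^*$ is closed and densely defined because $A$ is closed and densely defined, and $B^*\in B(H)$. Third, check the kernel dichotomy: if $\ker(A)\subseteq\ker(A^*)$ then $\ker((A^*)^*)\subseteq\ker(A^*)$, which is the first alternative of Theorem \ref{BA invertible implies AB invertible thm} with the closed densely defined operator being $A^*$; if instead $\ker(B^*)\subseteq\ker(B)$ then, since $(B^*)^*=B$ (as $B$ is bounded everywhere defined), this reads $\ker((B^*)^*)\subseteq\ker(B^*)$, i.e. the second alternative $\ker(\widetilde B)\subseteq\ker(\widetilde B^*)$ for $\widetilde B=B^*$. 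Fourth, conclude from Theorem \ref{BA invertible implies AB invertible thm} that $A^*$, $B^*$ and hence $B^*A^*$ are invertible. Fifth, pull back through adjoints: $A^*$ invertible with $(A^*)^{-1}\in B(H)$ gives, since $A$ is closed and densely defined, that $A$ is invertible (by the second Remark of Section 2, invertibility of a closed densely defined operator is equivalent to that of its adjoint); $B^*$ invertible in $B(H)$ gives $B$ invertible; and then $BA$ is invertible as a product of invertible operators, with $(BA)^{-1}=A^{-1}B^{-1}$, using the first Remark of Section 2.

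The main obstacle I expect is bookkeeping around adjoints of products and of unbounded operators: one must be careful that $(AB)^*=B^*A^*$ actually holds here (it does, because $B\in B(H)$ is bounded and everywhere defined, so the reverse-order law for adjoints is valid without the usual domain pathologies) and that $(B^*)^*=B$ (again fine since $B$ is bounded), and one must not silently assume $B^*A^*$ is closed — that is precisely why it is a hypothesis and why the naive ``interchange $AB$ and $BA$'' remark before the theorem fails. A secondary point is to make sure the translated kernel conditions land on the correct operator in each branch of Theorem \ref{BA invertible implies AB invertible thm}; once the substitution $(A,B)\rightsquigarrow(B^*,A^*)$ is fixed this is mechanical, but it is the step where an index slip would quietly break the argument. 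Everything else — the Closed Graph Theorem invocations, the equivalence of invertibility of a closed densely defined operator and its adjoint — is already packaged in the earlier results and remarks of the paper, so I would simply cite them rather than re-prove them.
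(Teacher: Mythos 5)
Your overall strategy is exactly the paper's: dualize, apply Theorem \ref{BA invertible implies AB invertible thm} to the pair $(B^*,A^*)$ in place of $(B,A)$, and pull the conclusions back through adjoints; your translation of the kernel hypotheses (using $A^{**}=A$ and $B^{**}=B$) is also correct. However, the pivotal step --- deducing that $B^*A^*$ is invertible from the invertibility of $AB$ --- is not correctly justified as written. First, you assert that $AB\in B(H)$, i.e.\ that $AB$ is everywhere defined and bounded; this is false in general (take $B=I$ and $A$ unbounded invertible). Invertibility of $AB$ in the paper's sense only gives that $AB$ is closed, injective, with a bounded everywhere defined inverse. Second, you justify $(AB)^*=B^*A^*$ by the boundedness of $B$. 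With the bounded factor on the \emph{right} of the product whose adjoint is taken, only the inclusion $B^*A^*\subseteq (AB)^*$ is automatic, and equality can genuinely fail (e.g.\ $B=0$: then $(AB)^*$ is everywhere defined while $B^*A^*$ is only defined on $D(A^*)$). The automatic reverse-order identity with a bounded factor is the other one, namely $(B^*A^*)^*=A^{**}B^{**}=AB$, and that is the one the paper exploits: since $B^*A^*$ is closed and densely defined (its domain is $D(A^*)$), the second remark of Section 2 says $B^*A^*$ is invertible iff its adjoint $AB$ is, which transfers invertibility in one stroke without ever needing $AB$ to be bounded or its adjoint to be computed.

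Your route can be repaired, but only by invoking exactly the hypothesis you attribute to a different role: because $B^*A^*$ is closed (and densely defined), one has $(AB)^*=\bigl((B^*A^*)^*\bigr)^*=(B^*A^*)^{**}=B^*A^*$, and one must also note beforehand that $AB=(B^*A^*)^*$ is densely defined (since $B^*A^*$ is closable), a point your write-up skips when it treats $(AB)^*$ as unproblematic. So the equality $(AB)^*=B^*A^*$ holds here because $B^*A^*$ is closed, not ``because $B$ is bounded.'' Once the invertibility of $B^*A^*$ is secured, the remainder of your argument --- applying Theorem \ref{BA invertible implies AB invertible thm} to $(B^*,A^*)$ with the translated kernel conditions, concluding that $A^*$ and $B^*$ are invertible, hence $A$ and $B$ are, and finally that $BA$ is invertible with $(BA)^{-1}=A^{-1}B^{-1}$ --- is correct and coincides with the paper's proof.
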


\begin{proof}This is a consequence of Theorem \ref{BA invertible implies AB invertible
thm}. Indeed, since $AB=(B^*A^*)^*$ is invertible and $B^*A^*$ is
closed and densely defined, we infer from the second remark in the
beginning of Section 2 that $B^*A^*$ is invertible. Applying Theorem
\ref{BA invertible implies AB invertible thm}, we see that $A^*$ and
$B^*$ are invertible, and by the same remark again so are $A$ and
$B$.

\end{proof}

\begin{cor}\label{sigma(AB)=sigma(BA) cor}
Let $A$ be a closed densely defined operator in $H$ and let $B\in
 B(H)$ be such that $\sigma(B^*A^*)\neq\C$ and $\sigma(BA)\neq \C$. If
 either $\ker
(A^*)=\ker(A)$ or $\ker(B)=\ker(B^*)$, then
\[\sigma(BA)=\sigma(AB).\]
\end{cor}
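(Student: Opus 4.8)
The plan is to reduce the spectral equality to the single equivalence ``$BA-\lambda$ invertible $\iff$ $AB-\lambda$ invertible'' for every $\lambda\in\C$, and then feed that equivalence through Theorems \ref{BA invertible implies AB invertible thm} and \ref{Ab invert implies BA invertible thm}. First I would recall that since $\sigma(BA)\neq\C$ and $\sigma(B^*A^*)\neq\C$, both $BA$ and $B^*A^*$ are closed; moreover $AB=(B^*A^*)^*$ is then closed as well (being the adjoint of a closed densely defined operator), so $\sigma(AB)\neq\C$ too, and all the operators in sight are closed and densely defined. For $\lambda=0$ the desired equivalence is precisely what Theorems \ref{BA invertible implies AB invertible thm} and \ref{Ab invert implies BA invertible thm} give: under $\ker(A^*)=\ker(A)$ we have both inclusions $\ker(A^*)\subseteq\ker(A)$ and $\ker(A)\subseteq\ker(A^*)$ available, so Theorem \ref{BA invertible implies AB invertible thm} yields ``$BA$ invertible $\Rightarrow$ $AB$ invertible'' and Theorem \ref{Ab invert implies BA invertible thm} yields the converse (the hypothesis ``$B^*A^*$ closed'' needed there is already in hand); similarly the condition $\ker(B)=\ker(B^*)$ supplies $\ker(B)\subseteq\ker(B^*)$ and $\ker(B^*)\subseteq\ker(B)$, which are exactly the alternative hypotheses in those two theorems.

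Next I would handle $\lambda\neq0$. Here the cleanest route is to invoke the already-quoted result of Hardt--Konstantinov \cite{Hardt-Konstantinov-Spectrum-product}: since $\sigma(AB)\neq\C$ and $\sigma(BA)\neq\C$, we have $\sigma(AB)-\{0\}=\sigma(BA)-\{0\}$, so for $\lambda\neq0$, $\lambda\in\sigma(BA)\iff\lambda\in\sigma(AB)$ automatically, with no kernel hypotheses required. Combining this with the $\lambda=0$ case treated above gives $\sigma(BA)=\sigma(AB)$ on the nose.

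There is one bookkeeping subtlety I expect to be the main (minor) obstacle: making sure the hypotheses of Theorems \ref{BA invertible implies AB invertible thm} and \ref{Ab invert implies BA invertible thm} are literally met, not merely morally met. Specifically, Theorem \ref{Ab invert implies BA invertible thm} requires ``$B^*A^*$ closed''; I get this for free from $\sigma(B^*A^*)\neq\C$ via the remark that a non-full spectrum forces closedness. Theorem \ref{BA invertible implies AB invertible thm} requires $A$ closed and densely defined, which is part of the standing hypothesis, and $B\in B(H)$, likewise given. One should also note that these two theorems are stated for the operators $BA$ and $AB$ themselves ($\lambda=0$), and that applying Hardt--Konstantinov disposes of all other $\lambda$; thus no ``shifted'' versions of the theorems are needed, and the kernel conditions are only ever used at $\lambda=0$. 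Assembling these pieces, the proof is short:

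\begin{proof}
Since $\sigma(BA)\neq\C$ and $\sigma(B^*A^*)\neq\C$, both $BA$ and $B^*A^*$ are closed, and so is $AB=(B^*A^*)^*$; hence $\sigma(AB)\neq\C$ as well, and $A$, $B$, $AB$, $BA$ are all closed and densely defined. For $\lambda\neq 0$, the Hardt--Konstantinov result (\cite{Hardt-Konstantinov-Spectrum-product}) gives $\sigma(AB)-\{0\}=\sigma(BA)-\{0\}$, so $\lambda\in\sigma(BA)$ iff $\lambda\in\sigma(AB)$.

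It remains to show $0\in\sigma(BA)$ iff $0\in\sigma(AB)$, i.e. that $BA$ is invertible iff $AB$ is. Suppose first that $\ker(A^*)=\ker(A)$. If $BA$ is invertible, then since $\ker(A^*)\subseteq\ker(A)$, Theorem \ref{BA invertible implies AB invertible thm} shows $A$, $B$ and $AB$ are invertible. Conversely, if $AB$ is invertible, then $B^*A^*$ is closed and $\ker(A)\subseteq\ker(A^*)$, so Theorem \ref{Ab invert implies BA invertible thm} shows $A$, $B$ and $BA$ are invertible. If instead $\ker(B)=\ker(B^*)$, the same two theorems apply, now using $\ker(B)\subseteq\ker(B^*)$ in Theorem \ref{BA invertible implies AB invertible thm} and $\ker(B^*)\subseteq\ker(B)$ in Theorem \ref{Ab invert implies BA invertible thm}. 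In either case $BA$ is invertible iff $AB$ is, and therefore $\sigma(BA)=\sigma(AB)$.
\end{proof}
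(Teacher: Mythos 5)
Your proof follows the paper's route exactly: the Hardt--Konstantinov result \cite{Hardt-Konstantinov-Spectrum-product} handles the nonzero part of the spectra, and Theorems \ref{BA invertible implies AB invertible thm} and \ref{Ab invert implies BA invertible thm} (each kernel equality splitting into the two inclusions those theorems respectively require) handle invertibility at $0$; the paper's own proof is just a compressed statement of these same three ingredients. One small repair is needed in your write-up: you justify $\sigma(AB)\neq\C$ by saying that $AB=(B^*A^*)^*$ is closed, but closedness does not rule out $\sigma(AB)=\C$ (the implication in the paper's remark goes the other way); the correct justification --- and the one the paper uses --- is $\sigma(AB)=\sigma[(B^*A^*)^*]=[\sigma(B^*A^*)]^*\neq\C$, i.e.\ the spectrum of the adjoint of the closed, densely defined operator $B^*A^*$ is the complex conjugate of $\sigma(B^*A^*)$, which is not all of $\C$ by hypothesis. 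With that one line inserted in place of the ``hence'', the argument is complete and coincides with the paper's.
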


\begin{proof}Since $\sigma(AB)=\sigma[(B^*A^*)^*]=[\sigma(B^*A^*))]^*\neq
\C$, we see that the required result follows from Theorem \ref{BA
invertible implies AB invertible thm}, Theorem \ref{Ab invert
implies BA invertible thm} and
\cite{Hardt-Konstantinov-Spectrum-product}.
\end{proof}

\begin{rema}
A condition of the type $\sigma(BA)\neq \C$ is not unnatural.
Remember that if $A,B\in B(H)$, then we always have $\sigma(BA)\neq
\C$!
\end{rema}

\begin{cor}
Let $A$ and $B$ be two self-adjoint operators such that $B$ is
bounded. If $\sigma(BA)\neq \C$, then
\[\sigma(AB)=\sigma(BA).\]
\end{cor}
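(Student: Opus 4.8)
The plan is to deduce the statement directly from Corollary \ref{sigma(AB)=sigma(BA) cor}, which is tailor-made for precisely this kind of situation; self-adjointness makes every hypothesis of that corollary trivial to verify, so almost nothing remains to be done.

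First I would record the structural facts. Since $A$ is self-adjoint it is closed and densely defined, so it qualifies as the operator ``$A$'' in Corollary \ref{sigma(AB)=sigma(BA) cor}. Since $B$ is bounded and self-adjoint, $B^*=B$, and since $A^*=A$ as well, the adjoint of the product satisfies $B^*A^*=BA$. Consequently $\sigma(B^*A^*)=\sigma(BA)$, so the hypothesis $\sigma(B^*A^*)\neq\C$ is nothing but the assumed $\sigma(BA)\neq\C$; and the remark on the spectrum in Section 3 then tells us that $BA$ (equivalently $B^*A^*$) is closed, so there is nothing further to check on that front. For the kernel conditions, $A=A^*$ trivially gives $\ker(A^*)=\ker(A)$, so the first of the two alternatives in Corollary \ref{sigma(AB)=sigma(BA) cor} holds (equally, $B=B^*$ gives $\ker(B)=\ker(B^*)$, which is the second alternative; either one suffices).

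With all hypotheses in place, Corollary \ref{sigma(AB)=sigma(BA) cor} yields $\sigma(AB)=\sigma(BA)$ at once, which finishes the proof. I do not expect any genuine obstacle here: the only real content is the bookkeeping observation that taking adjoints turns $BA$ into itself, so that the two-sided spectral-finiteness condition of the general corollary collapses onto the single given hypothesis $\sigma(BA)\neq\C$. Should one wish to avoid invoking Corollary \ref{sigma(AB)=sigma(BA) cor} and argue from scratch, one would instead apply Theorems \ref{BA invertible implies AB invertible thm} and \ref{Ab invert implies BA invertible thm} directly --- using that $\ker(A^*)\subseteq\ker(A)$ and $\ker(B^*)\subseteq\ker(B)$ both hold trivially --- to get that $AB$ is invertible if and only if $BA$ is, and then combine this with the Hardt--Konstantinov equality $\sigma(AB)\setminus\{0\}=\sigma(BA)\setminus\{0\}$; but routing through the already-packaged corollary is the cleaner route.
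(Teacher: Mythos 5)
Your argument is correct and is exactly the intended route: the paper states this corollary without proof as an immediate consequence of Corollary \ref{sigma(AB)=sigma(BA) cor}, and your verification that self-adjointness gives $B^*A^*=BA$ (so both spectral conditions collapse to $\sigma(BA)\neq\C$) together with $\ker(A^*)=\ker(A)$ is precisely what is needed.
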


We finish this paper with the following result (cf.
\cite{Mortad-PAMS2003}).

\begin{thm}\label{Theorem hypo}
Let $A$ and $B$ be two self-adjoint operators such that $B$ is
bounded and positive. If $BA$ is hyponormal, then both $BA$ and $AB$
are self-adjoint (and $AB=BA$!) whenever $\sigma(BA)\neq \C$.
\end{thm}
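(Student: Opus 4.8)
The plan is to reduce the statement about $BA$ being hyponormal to a spectral statement, then invoke Stampfli's characterization of hyponormal operators with real spectrum, exactly as in the bounded prototype stated just before Section 2. First I would observe that since $B$ is bounded, positive, and self-adjoint and $A$ is self-adjoint, the hypothesis $\sigma(BA)\neq\C$ together with the self-adjointness of both factors puts us in the setting of the corollary immediately preceding this theorem, which gives $\sigma(AB)=\sigma(BA)$. More importantly, I would want to show that $\sigma(BA)$ is \emph{real}. To see this, let $P$ be the unique positive square root of $B$, so $P\in B(H)$, $P$ is self-adjoint, and $B=P^2$. Then, using the spectral-permanence reasoning of Corollary~\ref{jjj} (adapted to the unbounded setting via Corollary~\ref{sigma(AB)=sigma(BA) cor}), one gets $\sigma(BA)=\sigma(P^2A)=\sigma(PAP)$ up to the invertibility-equivalence argument; since $PAP$ is symmetric (indeed $PAP\subseteq (PAP)^*$ because $A$ is self-adjoint and $P$ is bounded self-adjoint) and $\sigma(PAP)\neq\C$, the spectrum $\sigma(PAP)$ is real. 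Hence $\sigma(BA)$ is real.

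Next I would feed this into Stampfli's theorem: a hyponormal operator whose spectrum is real is self-adjoint. Since $BA$ is assumed hyponormal and we have just shown $\sigma(BA)\subseteq\R$, we conclude $BA$ is self-adjoint. Then $(BA)^*=BA$, i.e. $A B = AB^* = (BA)^* \cdot$ — more carefully, $A^*B^* = AB$, so $BA = (BA)^* = A^*B^* = AB$; this simultaneously gives $AB=BA$ and that $AB$ is self-adjoint as well. This disposes of both conclusions at once.

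The main obstacle I anticipate is justifying the identity $\sigma(BA)=\sigma(PAP)$ (equivalently, that $PAP$ is well-behaved enough) in the unbounded setting, since $A$ is unbounded and one must be careful about domains: $PAP$ has domain $\{x: Px\in D(A)\}$, and one must check it is densely defined and symmetric, and that $\sigma(PAP)\neq\C$ — this last point should follow because $\sigma(BA)\neq\C$ forces, via the nonzero-spectrum coincidence from \cite{Hardt-Konstantinov-Spectrum-product}, that $PAP = P(AP)$ and $AP\cdot P = AB$ have related spectra. An alternative, cleaner route avoiding $PAP$ altogether: use Corollary~\ref{sigma(AB)=sigma(BA) cor} with the roles arranged so that $\ker B = \ker B^*$ (automatic since $B$ is self-adjoint) to get $\sigma(AB)=\sigma(BA)$, and then argue directly that $\sigma(BA)$ is real by noting $BA$ is hyponormal with $\sigma(BA)=\sigma(AB)$, where $AB$ and $BA$ being adjoints of each other (on suitable domains) forces the spectrum to be symmetric about $\R$; combined with hyponormality (which constrains the spectrum further) one extracts reality. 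I would present the $P=B^{1/2}$ version as the primary argument since it is the most transparent and mirrors Corollary~\ref{jjj}, flagging the domain verifications as the technical heart of the proof.
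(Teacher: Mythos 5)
Your overall strategy (pass to $P=B^{1/2}$, compare $\sigma(BA)=\sigma(P^2A)$ with $\sigma(PAP)$, deduce that the spectrum is real, then conclude self-adjointness from hyponormality) is indeed the paper's strategy, but the two points you flag or gloss over are exactly where the real work lies, and as written they fail. First, your reality argument for $\sigma(PAP)$ is invalid: a closed symmetric operator which is not self-adjoint has spectrum equal to $\C$ or to a closed half-plane, so ``symmetric and $\sigma(PAP)\neq\C$'' does not yield $\sigma(PAP)\subset\R$. Moreover, you never actually establish $\sigma(PAP)\neq\C$: it cannot be extracted from the Hardt--Konstantinov--Mennicken comparison or from Corollary \ref{sigma(AB)=sigma(BA) cor}, since those results already \emph{assume} both spectra differ from $\C$. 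The paper closes both gaps at once by proving that $PAP$ is genuinely self-adjoint, via $(PAP)^*=(AP)^*P=(PA)^{**}P=\overline{PA}P=PAP$, and this computation requires $PA$ to be closed. That closedness is the missing ingredient you did not supply: it is Proposition \ref{P2A closed PA closed without inj! pro} ($P$ bounded self-adjoint and $P^2A$ closed imply $PA$ closed), applied here because $\sigma(BA)\neq\C$ forces $BA=P^2A$ to be closed. Without this, the ``domain verifications'' you defer (density of $D(PAP)$, self-adjointness rather than mere symmetry, $\sigma(PAP)\neq\C$) cannot be carried out, and they are the technical heart of the theorem, not a routine check.

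Second, your concluding step quotes Stampfli's theorem, which concerns \emph{bounded} hyponormal operators; $BA$ is unbounded here, so it does not apply as stated. The paper instead uses Janas's result $W(BA)\subseteq \text{conv}\,\sigma(BA)$ for unbounded hyponormal operators: once $\sigma(BA)=\sigma(PAP)\subset\R$, this gives $W(BA)\subset\R$, hence $BA$ is symmetric, and a closed symmetric operator with real spectrum is self-adjoint; then $AB=(BA)^*=BA$. Your proposed ``alternative cleaner route'' is also not viable: knowing $\sigma(AB)=\sigma(BA)$ and that the spectrum is symmetric about the real axis, together with hyponormality, does not force the spectrum to be real (already in the bounded case a normal, hence hyponormal, operator with spectrum $\{i,-i\}$ is a counterexample). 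So the proposal needs to be repaired by (i) importing Proposition \ref{P2A closed PA closed without inj! pro} to prove $PAP$ self-adjoint, and (ii) replacing Stampfli by the numerical-range argument valid for unbounded hyponormal operators.
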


\begin{rema}
The foregoing theorem was first shown with the extra assumption "$B$
being injective". Then, we discussed with Professor Jan Stochel
whether the closedness of $P^2A$ would imply that of $PA$, whenever
$P\in B(H)$ is self-adjoint? The answer turned out to be positive
and here is the result:
\end{rema}

\begin{pro}\label{P2A closed PA closed without inj! pro}
Let $P\in B(H)$ be self-adjoint and let $A$ be an arbitrary operator
such that $P^2A$ is closed. Then $PA$ is closed.
\end{pro}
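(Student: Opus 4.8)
The plan is to exploit that composing with the bounded, everywhere-defined operator $P$ changes nothing about domains, so that $D(PA)=D(P^2A)=D(A)$, and then to push a convergent sequence from the graph of $PA$ into the graph of the \emph{already closed} operator $P^2A$ by applying $P$ one more time. The only information this cannot recover directly is the component of the putative limit lying in $\ker P$; eliminating that component is the one genuinely nontrivial point, and it is handled using the self-adjointness of $P$ through the orthogonal decomposition $H=\ker P\oplus\overline{\ran(P)}$.

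Concretely, I would first record the elementary observation that $D(PA)=D(P^2A)=D(A)$ since $P\in B(H)$ is everywhere defined. Then take an arbitrary sequence $(x_n)$ in $D(A)$ with $x_n\to x$ in $H$ and $PAx_n\to y$ in $H$, and aim to prove $x\in D(A)$ and $PAx=y$. Applying the bounded operator $P$ to the relation $PAx_n\to y$ gives $P^2Ax_n=P(PAx_n)\to Py$. Since $P^2A$ is closed by hypothesis and $x_n\to x$, we conclude $x\in D(P^2A)=D(A)$ and $P^2Ax=Py$.

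It remains to upgrade $P^2Ax=Py$ to $PAx=y$. Rearranging, $P(PAx-y)=0$, so $PAx-y\in\ker P$. Here I would invoke self-adjointness: $\ker P=(\ran(P))^{\perp}$, hence $H=\ker P\oplus\overline{\ran(P)}$ orthogonally and, in particular, $\ker P\cap\overline{\ran(P)}=\{0\}$. Now $PAx\in\ran(P)$, while $y=\lim_n PAx_n$ is a limit of vectors of $\ran(P)$, so both $PAx$ and $y$ lie in the closed subspace $\overline{\ran(P)}$; therefore $PAx-y\in\overline{\ran(P)}\cap\ker P=\{0\}$. Thus $PAx=y$, which is exactly the closedness of $PA$. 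Note that no hypothesis on $A$ beyond $P^2A$ being closed is used, consistently with the statement.

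The main obstacle — really the only one — is this last step: naively one obtains only that the candidate limit agrees with $PAx$ modulo $\ker P$, and without self-adjointness of $P$ there is no reason for that $\ker P$-component to vanish (self-adjointness is precisely what makes $\ran(P)$ orthogonal to, hence transverse to, $\ker P$). Everything else is routine bookkeeping about domains and the closedness of $P^2A$.
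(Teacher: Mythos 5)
Your proof is correct and follows essentially the same route as the paper's: apply $P$ to pass into the graph of the closed operator $P^2A$, then observe that $PAx-y$ lies in both $\ker(P)$ and $\overline{\ran(P)}$, which by self-adjointness of $P$ are orthogonal, forcing $PAx=y$. No differences worth noting.
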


\begin{proof}Let $(x_n)$ be in $D(PA)=D(A)$ such that
  \[PAx_n\longrightarrow y \text{ and } x_n\longrightarrow x.\]
Then it is clear that $y\in \overline{\ran(P)}$. Since $P$ is
continuous, we obtain
\[P^2Ax_n\longrightarrow Py \text{ and } x_n\longrightarrow x.\]
As $P^2A$ is closed, we then obtain
  \[P^2Ax=Py \text{ and } x\in D(P^2A)=D(A).\]
Hence $P(y-PAx)=0$, that is, $y-PAx\in \ker (P)$. Since also
$y-PAx\in \overline{\ran (P)}$ and $P$ is self-adjoint, we get
$y-PAx\in [\ker (P)]^{\perp}$. Thus $y-PAx=0$ or $PAx=y$. Since we
already know that $x\in D(A)=D(PA)$, the proof of the closedness of
$PA$ is complete.
\end{proof}

Now, we prove Theorem \ref{Theorem hypo}:

\begin{proof} Let $P$ be the unique square root of
$B$. Since $\sigma(BA)\neq \C$, $BA$ or $P^2A$ is closed so that
$PA$ is closed by Proposition \ref{P2A closed PA closed without inj!
pro}. The rest of the proof is divided into two parts.
\begin{enumerate}
  \item First, $PAP$ is self-adjoint: Since $P$ is bounded and $PA$ is closed, we have
  \[(PAP)^*=(AP)^*P^*=(AP)^*P=(PA)^{**}P=\overline{PA}P=PAP,\]
  i.e. $PAP$ is surely self-adjoint so that $\sigma(PAP)\neq\C$.
  \item Second, we show that $BA$ and $AB$ are self-adjoint: Since $\sigma(P^2A)\neq\C$ and $\sigma(PAP)\neq \C$,
 by Corollary \ref{sigma(AB)=sigma(BA) cor}
  \[\sigma(BA)=\sigma(PAP)\subset \R.\]
  Now, if $W(BA)$ denotes the numerical range of $BA$, then from
  \cite{Janas-Hyponormal-unbd-III} we know that
  \[W(BA)\subset \text{conv }\sigma(BA)\subset\R\]
  for $BA$ is hyponormal. Thus $BA$ is closed, symmetric and with real
  spectrum, it is self-adjoint! Accordingly,
  \[AB=(BA)^*=BA.\]
\end{enumerate}
\end{proof}

\begin{rema}
If we assume that $BA$ is subnormal (which is \textit{stronger} than
hyponormal), then we can obtain the self-adjointness of $BA$ and
$AB$ without using the machinery of the preceding proof, we just
apply Theorem 4.2 of \cite{STO}, and other known properties.
\end{rema}

\section{Conclusion}
It was the referee's idea to improve the results in the case of
unbounded operators by using conditions on kernels. Indeed, in the
first version of the paper we only dealt with normal and
self-adjoint operators. Needless to say that some of the results in
the bounded case are particular cases of some of those of their
unbounded counterparts.

\section*{Acknowledgement}

The authors wish to warmly thank the referee for his suggestions and
for his meticulousness.

\end{document}